\documentclass[12pt, twoside]{article}
\usepackage{amsmath,amsthm,amssymb}
\usepackage{times}
\usepackage{enumerate}
\usepackage{bm}
\usepackage[all]{xy}
\usepackage{mathrsfs}
\usepackage{amscd}
\usepackage{color}
\def\titlerunning#1{\gdef\titrun{#1}}
\makeatletter
\def\author#1{\gdef\autrun{\def\and{\unskip, }#1}\gdef\@author{#1}}
\def\address#1{{\def\and{\\\hspace*{18pt}}\renewcommand{\thefootnote}{}%
		\footnote {#1}}%
	\markboth{\autrun}{\titrun}}
\makeatother
\def\email#1{e-mail: #1}

\def\keywords#1{\par\medskip
	\noindent\textbf{Keywords.} #1}

\newtheorem{theorem}{Theorem}[section]
\newtheorem{corollary}[theorem]{Corollary}
\newtheorem{lemma}[theorem]{Lemma}
\newtheorem{proposition}[theorem]{Proposition}
\theoremstyle{definition}

\newtheorem{remark}[theorem]{Remark}

\numberwithin{equation}{section}

\newcommand{\la}{\lambda}
\newcommand{\La}{\Lambda}
\newcommand{\diam}{\operatorname{diam}}
\newcommand{\ANCO}{\text{ANCO}}

\xyoption{all}

\def \la {\lambda}
\def \La {\Lambda}

\begin{document}
	\baselineskip=17pt
\titlerunning{Euler Characteristic of Closed Manifolds with ANCO}
\title{Euler Characteristic of Closed Manifolds with Almost Nonnegative Curvature Operator}

\author{Jing-Bin Cai}

\date{}

\maketitle

\address{J.~Cai: School of Mathematical Sciences, University of Science and Technology of China, Hefei, Anhui 230026, People's Republic of China; \email{binge@mail.ustc.edu.cn}}

\begin{abstract}
	We study closed manifolds with almost nonnegative curvature operator and address a question of Herrmann--Sebastian--Tuschmann concerning the sign of their Euler characteristic. Our main result shows that if a closed $2n$-dimensional manifold admits an almost nonnegative curvature operator together with a uniform upper bound on the curvature operator, then its Euler characteristic is nonnegative. In addition, under an ANCO-type condition and assuming that the fundamental group is infinite, we prove vanishing results for the Euler characteristic, the signature, and, in the spin case, the $\widehat{A}$-genus, extending recent work of Chen--Ge--Han from almost nonnegative Ricci curvature to the curvature-operator setting.
\end{abstract}

\keywords{Euler characteristic; almost nonnegative curvature operator}

\section{Introduction}

A central theme in Riemannian geometry is to understand how curvature conditions constrain the topology of manifolds. In this context, the curvature operator plays a particularly rigid role. Let $(X,g)$ be a closed $n$-dimensional Riemannian manifold, and let $\la_{1} \leq \cdots \leq \la_{\binom{n}{2}}$ denote the eigenvalues of its curvature operator. Classical work of Berger \cite{Ber61} and Meyer \cite{Mey71} established vanishing results for the Betti numbers of manifolds with positive curvature operator (i.e.\ $\la_{1}>0$); in particular, Meyer showed that such manifolds are rational (co)homology spheres. More recently, Petersen and Wink \cite{PW21} proved a more general vanishing theorem and an estimate for the $p$-th Betti number of closed $n$-dimensional Riemannian manifolds satisfying
\[
\la_{1} + \cdots + \la_{n-p} \geq 0.
\]
As a consequence, they showed that manifolds whose curvature operator is $\lceil\frac{n}{2} \rceil$-positive are rational homology spheres.

Motivated by the classification of manifolds with nonnegative curvature operator and by collapsing phenomena, Herrmann, Sebastian, and Tuschmann introduced the notion of almost nonnegative curvature operator. A closed smooth manifold $X$ is said to admit an \emph{almost nonnegative curvature operator} (\ANCO) if it admits a sequence of Riemannian metrics $\{g_i\}$ such that all eigenvalues $\la_j(g_i)$ of the associated curvature operator and the diameter satisfy
\[
\la_j(g_i) \geq -\frac{1}{i} \quad \text{for all } j, \qquad \diam(X, g_i) \leq 1.
\]
We will refer to such manifolds as ANCO manifolds. This notion is a natural ``almost nonnegative'' analogue of nonnegative curvature operator and was introduced in \cite{HST13,Tus16}. While the recent work of B\"ohm and Wilking \cite{BW08} completed the classification of manifolds with nonnegative and positive curvature operator---a program initiated by Berger more than half a century ago---the case of ANCO manifolds remains largely unexplored.

From the viewpoint of collapsing theory, ANCO manifolds arise naturally as well. Lott \cite{Lot14} developed an initial structure theory for manifolds collapsing under a lower bound on the curvature operator, extending earlier results of Cheeger and Gromov on $F$-structures under two-sided sectional curvature bounds to this setting. His work suggests that, in a more general collapsing picture that incorporates the various directions present in the Cheeger--Fukaya--Gromov theory of $N$-structures, ANCO manifolds should play a role analogous to that of almost flat manifolds \cite{Gro78,Ruh82}.

It is well known that the Euler characteristic of a manifold with nonnegative curvature operator is nonnegative (see Proposition~\ref{P2}). In \cite{HST13}, Herrmann, Sebastian, and Tuschmann formulated the following natural question, which asks to what extent this nonnegativity property persists for ANCO manifolds:

\begin{quote}
	\textbf{Question 4.6.} Do manifolds with almost nonnegative curvature operator have nonnegative Euler characteristic?
\end{quote}

For general Riemannian manifolds, Huang and Tan \cite{HT23} obtained partial affirmative answers. In particular, they proved the following result, which we quote in a slightly simplified form.

\begin{theorem}[\cite{HT23}]\label{thm:HT}
	Let $(X,g)$ be a closed smooth $2n$-dimensional Riemannian manifold with $b_1(X) > 0$. There exists a uniformly positive constant $c = c(n) > 0$ such that if the eigenvalues of the curvature operator satisfy
	\begin{equation}\label{eq:main}
		\big( \la_1(g) + \cdots + \la_n(g) \big) \cdot \diam^2(g) \geq -c(n),
	\end{equation}
	then $\chi(X) = 0$.
\end{theorem}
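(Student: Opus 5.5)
The plan is to use the Bochner technique on harmonic $1$-forms and $p$-forms, combined with the Petersen--Wink estimate for the curvature term in the Weitzenböck formula. Since $b_1(X)>0$, there is a nonzero harmonic $1$-form, and the goal is to show that a suitable twisted complex has vanishing cohomology in all degrees.

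The first route I would try is a Witten-type deformation of the de Rham complex, in the style of Novikov cohomology. Pick a harmonic $1$-form $\omega$ representing a nonzero class in $H^1(X;\mathbb{R})$. For $t\in\mathbb{R}$, consider the twisted differential $d_t=d+t\,\omega\wedge$. The Euler characteristic of the complex $(\Omega^*(X),d_t)$ equals $\chi(X)$ for every $t$, since the index is a deformation invariant and $d_t$ differs from $d$ by a zeroth-order term. It therefore suffices to show that the twisted Laplacian $\Delta_t=d_td_t^*+d_t^*d_t$ has trivial kernel for some $t$.

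Expanding the twisted Laplacian gives
\[
\Delta_t=\Delta+t(\mathcal{L}_{\omega^\sharp}+\mathcal{L}_{\omega^\sharp}^*)+t^2|\omega|^2 .
\]
Since $\omega$ is harmonic, the first-order part reduces to a zeroth-order term built from $\nabla\omega$. So, pairing with a $p$-form $\phi$, one obtains
\[
\langle\Delta_t\phi,\phi\rangle_{L^2}\ge \|\nabla\phi\|^2+\int\langle\mathrm{Ric}_p\phi,\phi\rangle+t^2\int|\omega|^2|\phi|^2-C|t|\int|\nabla\omega||\phi|^2 .
\]
By Petersen--Wink, the curvature term satisfies $\langle\mathrm{Ric}_p\phi,\phi\rangle\ge c_p(\la_1+\dots+\la_{n-p'})|\phi|^2$, with the relevant partial sum bounded below by $\la_1+\dots+\la_n$ for all degrees in a $2n$-manifold (using Hodge duality to restrict to $p\le n$). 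Hypothesis \eqref{eq:main} then gives this term a lower bound of the form $-c'\,c(n)\,\diam^{-2}$.

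The main obstacle is that $|\omega|$ may vanish somewhere, so $t^2|\omega|^2$ need not dominate pointwise. To handle this, I would control the harmonic $1$-form itself by its own Bochner formula,
\[
\tfrac12\Delta|\omega|^2=|\nabla\omega|^2+\mathrm{Ric}(\omega,\omega),
\]
where $\mathrm{Ric}\ge(\la_1+\dots+\la_{2n-1})\ge-\epsilon\,\diam^{-2}$. A Moser iteration or Li--Yau-type gradient estimate, with Sobolev constants controlled via the Ricci lower bound and the diameter (Gallot's estimates), then shows the following for small $\epsilon$. First, $\|\nabla\omega\|_{L^2}^2\le C\epsilon\,\diam^{-2}\|\omega\|^2_{L^2}$. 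Second, $|\omega|$ is nearly constant, with $\sup|\omega|^2\le(1+C\epsilon)\,\mathrm{avg}|\omega|^2$ and $\inf|\omega|^2\ge \tfrac12\mathrm{avg}|\omega|^2$.

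Normalizing $\mathrm{avg}|\omega|^2=\diam^{-2}$ and taking $t$ of order $1$, the positive term $t^2|\omega|^2\gtrsim \diam^{-2}$ dominates both the curvature error $c(n)\diam^{-2}$ and the cross term $|t||\nabla\omega|$. For the cross term I would avoid a pointwise bound on $\nabla\omega$. Instead I would integrate it against $|\phi|^2$, using Kato's inequality and Sobolev to bound $\|\phi\|_{L^\infty}$-type quantities by $\|\nabla\phi\|_{L^2}+\diam^{-1}\|\phi\|_{L^2}$. Alternatively, one can use an $L^\infty$ bound on $|\nabla\omega|$ from elliptic estimates for the Bochner equation.

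Choosing $c(n)$ small enough makes $\Delta_t$ strictly positive, so the twisted cohomology vanishes and $\chi(X)=0$. I expect the uniform near-constancy of $|\omega|$ and the control of the cross term, using only the lower curvature bound, to be the delicate step. Everything else is a routine Weitzenböck computation.
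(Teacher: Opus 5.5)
The paper gives no proof of this statement: it is quoted from Huang--Tan \cite{HT23} and used only to obtain Corollary~\ref{cor:dim4}, so your argument has to stand on its own.

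The setup is sound. The $d_t$-complex has Euler characteristic $\chi(X)$, your expansion of $\Delta_t$ is correct, and Proposition~\ref{P1} with $p=n$ controls the Weitzenb\"ock term in every degree. However, the step you call delicate is a genuine gap.

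Bochner plus Kato give only $\Delta|\omega|\ge-\epsilon\,\diam^{-2}|\omega|$, so $|\omega|$ is a \emph{sub}solution. Moser iteration with Gallot's Sobolev constants then bounds $\sup|\omega|$ from above; it never bounds $\inf|\omega|$ from below. A lower bound needs a supersolution inequality, i.e.\ upper control of $\Delta|\omega|^2=2|\nabla\omega|^2+2\,\mathrm{Ric}(\omega,\omega)$. That requires bounds on $|\nabla\omega|^2$ in $L^\infty$ (or $L^q$ with $q>n$) and an upper curvature bound, none of which the hypothesis provides.

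The integrated Bochner identity only makes $\|\nabla\omega\|_{L^2}$ small. That makes $|\omega|$ $L^2$-close to a constant, which does not rule out zeros in dimension $2n\ge 4$. Your fallback, a pointwise bound on $\nabla\omega$ from elliptic estimates, likewise needs two-sided curvature control (to have the metric in $C^{1,\alpha}$ harmonic coordinates). That is exactly the kind of extra hypothesis Theorem~\ref{thm:main} adds.

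The cross term has the same defect when $2n\ge 6$. H\"older and Sobolev give $\int|\nabla\omega||\phi|^2\le\|\nabla\omega\|_{L^n}\|\phi\|_{L^{2n/(n-1)}}^2$, so you would need $\|\nabla\omega\|_{L^n}$ small rather than $\|\nabla\omega\|_{L^2}$, because $W^{1,2}\not\subset L^4$ in that range.

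More fundamentally, the Witten deformation does no work in your argument. If you had $\inf|\omega|>0$, then $\omega^\sharp$ would be a nowhere-vanishing vector field, and $\chi(X)=0$ would follow at once from Poincar\'e--Hopf. So the proposal reduces the theorem to a pointwise non-vanishing statement for harmonic $1$-forms under a lower curvature bound. That statement already implies the conclusion, and nothing in your argument supplies it. A repair must either find a genuinely new source of pointwise control on $\omega$ (after which Poincar\'e--Hopf suffices), or use an argument that never requires $\omega$ to be nonvanishing.
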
	

As a first consequence, one obtains a complete affirmative answer to Question~4.6 in dimension $4$. Indeed, if $\dim X = 4$, then
\[
\chi(X) = 2 + b_2(X) - 2b_1(X).
\]
If $b_1(X) = 0$, then $\chi(X) \geq 2$; if $b_1(X) \geq 1$, then Theorem~\ref{thm:HT} implies $\chi(X) = 0$. Thus we have:

\begin{corollary}\label{cor:dim4}
	Let $(X,g)$ be a closed smooth $4$-dimensional Riemannian manifold. There exists a uniformly positive constant $c > 0$ such that if the eigenvalues of the curvature operator of $g$ satisfy
	\begin{equation}\label{eq:dim4}
		\big( \la_1(g) + \la_2(g) \big) \cdot \diam^2(g) \geq -c,
	\end{equation}
	then $\chi(X) \geq 0$.
\end{corollary}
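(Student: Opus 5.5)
We will take the constant $c$ in Corollary~\ref{cor:dim4} to be $c = c(2)$ from Theorem~\ref{thm:HT} with $n=2$. Then hypothesis \eqref{eq:dim4} is exactly hypothesis \eqref{eq:main} in dimension $2n=4$. The plan is a case split on the first Betti number $b_1(X)$, combined with Poincaré duality to write $\chi(X)$ in terms of $b_1$ and $b_2$.

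First we record the Euler characteristic formula. The manifold $X$ is closed, and we may assume it is connected, since otherwise we argue componentwise: the hypothesis passes to each component, because the curvature eigenvalues are pointwise and the diameter of a component is at most the diameter of $X$. The quantity $\chi$ is then additive over components. If $X$ is orientable, Poincaré duality over $\mathbb{R}$ gives $b_0=b_4=1$ and $b_3=b_1$, hence
\[
\chi(X)=2-2b_1(X)+b_2(X).
\]
If $X$ is non-orientable, we pass to the orientable double cover $\tilde X$ with the pulled-back metric. Its curvature eigenvalues are unchanged, and its diameter is at most $2\diam(X)$. This loss of a factor $4$ in $\diam^2$ can be absorbed by shrinking $c$ to $c(2)/4$. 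Since $\chi(\tilde X)=2\chi(X)$, it suffices to treat the orientable case. Alternatively, one can note directly that $b_4=0$ and $b_3=b_1-1$ with $\mathbb{Z}/2$ coefficients, but the covering argument is cleaner.

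The two cases are then as follows. If $b_1(X)=0$, the formula gives $\chi(X)=2+b_2(X)\ge 2>0$, with no curvature input needed. If $b_1(X)\ge 1$, Theorem~\ref{thm:HT} applies directly and yields $\chi(X)=0$. In both cases $\chi(X)\ge0$.

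The only step that needs any care is the orientability and connectedness bookkeeping. We must check that the rescaling of the constant under the double cover is uniform, and this holds because the diameter at most doubles. Everything else is an immediate application of Theorem~\ref{thm:HT}.
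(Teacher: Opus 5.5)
Your core argument matches the paper's: use $\chi(X)=2-2b_1(X)+b_2(X)$, split on $b_1(X)$, and apply Theorem~\ref{thm:HT} with $n=2$ when $b_1(X)\ge 1$.

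What you add is the reduction to the orientable case through the orientation double cover, and this is more than bookkeeping. The paper's one-line proof uses the Betti-number formula without comment, and that formula fails for non-orientable manifolds. For example, let $\sigma$ be the hyperelliptic involution of $\Sigma_g$ and $a$ the antipodal map of $S^2$. Then $X=(\Sigma_g\times S^2)/\langle(\sigma,a)\rangle$ is a closed non-orientable $4$-manifold with $b_1(X)=0$ and $\chi(X)=2-2g<0$ for $g\ge 2$. So the paper's inference ``$b_1=0\Rightarrow\chi\ge 2$'' is false for this $X$. Your argument still handles it: the lift to $\Sigma_g\times S^2$ has $b_1=2g>0$ and $\chi\neq 0$, so no metric on $X$ can satisfy the hypothesis. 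The price is the constant $c=c(2)/4$ rather than $c(2)$. State that choice at the outset instead of first announcing $c=c(2)$.

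Two small repairs are needed.
\begin{itemize}
\item Give the one-line proof of $\diam(\tilde X)\le 2\diam(X)$. Fix $\tilde x$; the function $\tilde z\mapsto d(\tilde x,\tilde z)-d(\tilde x,\tau\tilde z)$ is odd under the deck involution $\tau$, so it vanishes at some $\tilde z_0$ of the connected cover. Both $\tilde z_0$ and $\tau\tilde z_0$ are then within $\diam(X)$ of $\tilde x$, and every point of $\tilde X$ is within $\diam(X)$ of one of them.
\item Delete the parenthetical mod-$2$ ``alternative'', which is wrong. Mod-$2$ Poincar\'e duality gives $b_4=b_0=1$ and $b_3=b_1$. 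In any case Theorem~\ref{thm:HT} requires the real first Betti number to be positive, whereas $H^1(X;\mathbb{Z}/2)\neq 0$ for every non-orientable $X$.
\end{itemize}
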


The main purpose of this note is to provide further evidence for Question~4.6 under a natural additional hypothesis, namely the existence of a uniform upper bound on the curvature operator. Following an idea of Lott~\cite{Lot14}, we show that in even dimensions such a two-sided control on the curvature operator forces the Euler characteristic to be nonnegative.

\begin{theorem}\label{thm:main}
	Let $X$ be a closed smooth $2n$-dimensional Riemannian manifold. For any $\La > 0$, there exists $\varepsilon(n,\La) > 0$ such that if the curvature operator of $g$ satisfies
	\begin{equation}\label{eq:bound}
		-\varepsilon(n,\La) \leq \la_1(g) \cdot \diam^2(g) \leq \cdots \leq \la_{n(2n-1)}(g) \cdot \diam^2(g) \leq \La,
	\end{equation}
	then $\chi(X) \geq 0$.
\end{theorem}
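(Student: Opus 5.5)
Since the hypothesis \eqref{eq:bound} is scale invariant, I first rescale so that $\diam(X,g)=1$. The condition then reads $-\varepsilon\le \la_1\le\cdots\le\la_{n(2n-1)}\le\La$. The plan is to argue by contradiction. Suppose no $\varepsilon(n,\La)$ works. Then there are closed $2n$-manifolds $X_i$ with metrics $g_i$ such that $\diam(g_i)=1$, $-1/i\le\la_j(g_i)\le\La$, and $\chi(X_i)<0$.

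The two-sided bound on the curvature operator gives two-sided sectional curvature bounds $|\sec|\le C(n,\La)$. This is the setting of Cheeger--Fukaya--Gromov collapsing theory, and following Lott's idea I split into a noncollapsed and a collapsed case according to $\mathrm{vol}(g_i)$.

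\textbf{Noncollapsed case:} $\mathrm{vol}(g_i)\ge v>0$ along a subsequence. By Cheeger--Gromov compactness (using the volume bound, $|\sec|\le C$ and $\diam=1$), a subsequence has diffeomorphic $X_i\cong X$, and the metrics converge in $C^{1,\alpha}$ to a limit $g_\infty$. To keep the curvature sign in the limit, I would first smooth by Ricci flow for a short time $t_0(n,\La)$. Shi's estimates give uniform $C^k$ bounds, and by Hamilton's maximum principle (the cone of nonnegative curvature operators is preserved, with quantitative almost-preservation) one gets $\la_1(g_i(t_0))\ge -\delta_i\to0$. The flowed metrics then converge smoothly to a metric on $X$ with nonnegative curvature operator. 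Proposition~\ref{P2} gives $\chi(X)\ge0$, a contradiction.

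\textbf{Collapsed case:} $\mathrm{vol}(g_i)\to0$. By Cheeger--Fukaya--Gromov (Cheeger--Gromov for $F$-structures), for $i$ large $X_i$ carries a nontrivial $F$-structure, indeed an $N$-structure, of positive rank. A closed manifold admitting a positive-rank $F$-structure has $\chi(X_i)=0$: the Euler characteristic localizes to fixed points of the local torus actions, and these are absent. This again contradicts $\chi(X_i)<0$. This step is pure topology, and the lower curvature operator bound is not even needed there.

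The main obstacle I expect is the noncollapsed case. Specifically, I need to pass the almost nonnegativity of the curvature operator to the limit, since $C^{1,\alpha}$ convergence does not control curvature directly. The Ricci-flow smoothing needs a quantitative estimate: for bounded curvature, on $[0,t_0]$ the lowest eigenvalue satisfies $\la_1(t)\ge -\varepsilon e^{Ct}$. I would derive this from the ODE $\dot R=R^2+R^\#$ via the maximum principle applied to the distance from the cone. The fallback, if this fails, is Lott's approach: use harmonic coordinates and weak $W^{2,p}$ convergence. The curvature operator then converges weakly in $L^p$, and nonnegativity is a closed convex condition preserved under weak limits. The Gauss--Bonnet--Chern integrand, however, is nonlinear, so the Ricci-flow route seems preferable.
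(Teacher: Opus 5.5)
Your proof is correct, but it is organized differently from the paper's.

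\textbf{Collapsed case.} The paper makes no collapsed/noncollapsed split. Since $\chi(X_i)$ is a nonzero integer and the Gauss--Bonnet integrand is bounded by $C(n,\Lambda)$, one gets $1\le|\chi(X_i)|\le C(n,\Lambda)\operatorname{Vol}(X_i,g_i)$ (Lemma~\ref{L1}), so collapse never occurs. Your appeal to Cheeger--Gromov $F$-structures is valid, but it is far heavier than this one-line estimate, which already forces $\chi(X_i)=0$ once the volume is small.

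\textbf{Noncollapsed case.} The paper works with $C^{1,\alpha}$/$L^p_2$ limits. It uses $L^p$ convergence of the curvature tensors and Weyl's perturbation theorem to get $\mathfrak{R}_\infty\ge0$ almost everywhere, and then passes the Gauss--Bonnet integral to the limit. Your Ricci-flow smoothing instead yields a smooth limit with nonnegative curvature operator, so Proposition~\ref{P2} applies verbatim.

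The step you flagged as the main obstacle is fine. For a unit eigenvector $v$ of $\lambda_1$, the $(1,1)$-entry of $\mathfrak{R}^\#$ in an eigenbasis involves only products $\lambda_\beta\lambda_\gamma$ with $\beta,\gamma\neq 1$. Hence $\langle(\mathfrak{R}^2+\mathfrak{R}^\#)v,v\rangle\ge -C\max(0,-\lambda_1)$, and the scalar maximum principle gives $\lambda_1(t)\ge-\varepsilon e^{Ct}$. Uniform metric equivalence along the flow keeps the diameter and volume bounds.

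What your route buys is robustness. Uniform curvature bounds alone give only weak $L^p$ convergence of curvature, not the strong convergence the paper invokes.

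\textbf{Your fallback.} It would also have worked, and the nonlinearity of the Pfaffian is not an obstacle there. Nonnegativity of $\mathfrak{R}$ is a closed convex condition, so it survives weak limits. You then only need Gauss--Bonnet for the $W^{2,p}$ limit metric (valid for $p$ large by mollification) plus pointwise nonnegativity of the Pfaffian, not convergence of the integrals.

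\textbf{A shortcut.} No compactness is needed at all. The operator $\mathfrak{R}+\varepsilon\,\mathrm{Id}$ is a nonnegative algebraic curvature operator. So by the pointwise form of Proposition~\ref{P2}, and since $\mathrm{Pf}$ is a degree-$n$ polynomial (hence Lipschitz on bounded sets), $\mathrm{Pf}(\mathfrak{R})\ge -C(n,\Lambda)\varepsilon$ pointwise. At $\diam=1$, Bishop--Gromov gives $\operatorname{Vol}\le V(n)$. Hence $\chi(X)\ge -C(n,\Lambda)V(n)\varepsilon>-1$, and integrality gives $\chi(X)\ge 0$.
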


In particular, Theorem~\ref{thm:main} gives a positive answer to Question~4.6 for ANCO manifolds whose curvature operators admit a uniform upper bound in the above sense. The proof proceeds by a compactness argument: under the two-sided bounds~\eqref{eq:bound}, a Cheeger--Gromov type convergence theorem yields a limit metric with nonnegative curvature operator, and the nonnegativity of the Euler characteristic then follows from the Chern--Gauss--Bonnet formula (see Proposition~\ref{P2}).

In addition, we establish a vanishing theorem for genera under a natural ANCO-type hypothesis and an assumption on the fundamental group. For a closed smooth $2n$-dimensional manifold $X$ with infinite fundamental group, we show that suitable almost nonnegativity conditions on the curvature operator imply the vanishing of the Euler characteristic, the signature, and, in the spin case, the $\widehat{A}$-genus (see Theorem~\ref{thm:vanishing-genera}). This result can be viewed as a curvature-operator analogue of the vanishing theorems obtained by Chen, Ge, and Han under almost nonnegative Ricci curvature~\cite{CGH24}.

The paper is organized as follows. In Section~2 we recall the relevant Bochner--Weitzenb\"ock formulas and establish a vanishing theorem for genera under an almost nonnegative curvature operator condition. In Section~3 we combine compactness results for Riemannian manifolds with bounded curvature operator and Weyl's perturbation theorem to prove Theorem~\ref{thm:main}.

\section{Curvature operator and Weitzenb\"ock formulas}

Let $(X,g)$ be a closed $n$-dimensional Riemannian manifold, and let its curvature tensor be given by
\[
R(X,Y)Z = \nabla_Y\nabla_X Z - \nabla_X\nabla_Y Z + \nabla_{[X,Y]}Z.
\]
Given a tensor field $T$, the Weitzenb\"ock curvature term acting on $T$ is defined by
\[
Ric(T)(X_1,\dots,X_k) = \sum_{i=1}^k \sum_{j=1}^n \left(R(X_i, e_j)T\right)(X_1, \dots, e_j, \dots, X_k),
\]
where $\{e_j\}_{j=1}^n$ is a local orthonormal frame. This operator generalizes the Ricci tensor, in the sense that it coincides with the usual Ricci endomorphism when applied to vector fields and $1$-forms. The Hodge Laplacian $\Delta_d$ satisfies the Weitzenb\"ock formula
\[
\Delta_d T = \nabla^*\nabla T + Ric(T).
\]
The curvature operator of the metric $g$ is the bundle endomorphism
\[
\mathfrak{R} : \Lambda^2 TX \to \Lambda^2 TX, \qquad (\mathfrak{R}(\omega))_{ij} = \sum_{k,l} R_{ijkl} \omega_{kl},
\]
which is self-adjoint with respect to the natural inner product on $\Lambda^2 TX$.

We recall the following curvature estimate, which is the basic linear-algebraic ingredient in the Bochner technique of Petersen and Wink.

\begin{proposition}[\cite{PW21}, Lemma~2.1]\label{P1}
	Let $(X,g)$ be a closed $m$-dimensional Riemannian manifold, and let
	\[
	\lambda_1 \leq \cdots \leq \lambda_{\binom{m}{2}}
	\]
	be the eigenvalues of its curvature operator. Fix an integer $1 \leq p \leq \lfloor \tfrac{m}{2}\rfloor$, and suppose that for some $\kappa \le 0$ one has
	\[
	\frac{\lambda_1 + \cdots + \lambda_{m-p}}{m-p} \geq \kappa.
	\]
	Then for any $\alpha \in \Omega^k(X)$ with $k \leq p$ or $k \geq m-p$, one has
	\[
	g(Ric(\alpha), \alpha) \geq \kappa\, k(m-k)\,|\alpha|^2.
	\]
\end{proposition}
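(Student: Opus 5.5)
The estimate is pointwise and purely linear-algebraic, so I would fix a point $x\in X$ and an orthonormal basis $\{e_j\}$ of $T_xX$. The first step is to rewrite the Weitzenb\"ock term through the curvature operator. Let $\Xi_a$ be an orthonormal eigenbasis of $\mathfrak{R}$ in $\Lambda^2T_xX$, with $\mathfrak{R}(\Xi_a)=\lambda_a\Xi_a$. Then the standard identity
\[
g(Ric(\alpha),\alpha)=\sum_{a}\lambda_a\,|\Xi_a\alpha|^2
\]
holds up to a fixed normalization. Here $\Xi\alpha$ is the derivation action of $\mathfrak{so}(T_xX)\cong\Lambda^2T_xX$ on $\Lambda^kT_x^*X$. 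This identity is obtained by expanding $R(X_i,e_j)$ in the definition of $Ric(T)$ using the eigen-decomposition of $\mathfrak{R}$, and it reduces the claim to estimating the weights $|\Xi_a\alpha|^2$.

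Next I would establish two bounds on these weights. The first is a trace identity: summing over any orthonormal basis of $\Lambda^2$ gives the Casimir eigenvalue,
\[
\sum_a|\Xi_a\alpha|^2=k(m-k)|\alpha|^2,
\]
because $\Lambda^k$ is irreducible, or because a direct computation with $e_i\wedge e_j$ gives $k(m-k)$. The second is a pointwise bound, $|\Xi_a\alpha|^2\le \frac{k(m-k)}{m-p}|\alpha|^2$ for each $a$, valid when $k\le p$. This is the key lemma of Petersen--Wink. It follows by writing $\Xi=\sum\theta_i e_{2i-1}\wedge e_{2i}$ in normal form and bounding $|\Xi\alpha|^2$ for a unit $\Xi$. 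The case $k\ge m-p$ reduces to $k\le p$ by Hodge duality, since $\ast$ commutes with the $\mathfrak{so}$ action and $k(m-k)$ is symmetric in $k\leftrightarrow m-k$.

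With these two bounds, set $w_a=|\Xi_a\alpha|^2/|\alpha|^2$. Then $0\le w_a\le W:=\frac{k(m-k)}{m-p}$ and $\sum w_a=k(m-k)=(m-p)W$. Minimizing the linear function $\sum\lambda_aw_a$ over this polytope, with the $\lambda_a$ increasing, puts maximal weight $W$ on the smallest $m-p$ eigenvalues. This gives
\[
g(Ric(\alpha),\alpha)\ge W(\lambda_1+\cdots+\lambda_{m-p})|\alpha|^2\ge W(m-p)\kappa|\alpha|^2=\kappa k(m-k)|\alpha|^2 .
\]
Note the last step uses the hypothesis directly and does not need $\kappa\le0$.

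The main obstacle is the pointwise bound $|\Xi\alpha|^2\le\frac{k(m-k)}{m-p}|\alpha|^2$. I would first try the normal form: with $\sum\theta_i^2=\frac12$ (so $|\Xi|=1$), expand $\alpha$ in the basis adapted to the planes $e_{2i-1}\wedge e_{2i}$. Each plane acts as a rotation, and each basis $k$-form is moved only by planes it meets in exactly one vector, at most $k$ of them. Cauchy--Schwarz then gives $|\Xi\alpha|^2\le k\cdot\max(\text{weight})\,|\alpha|^2$, and one checks this is at most $\frac{k(m-k)}{m-p}$ when $k\le p\le m/2$. If the normalization constants fight me, I would simply cite Petersen--Wink's proof, since the proposition is quoted from \cite{PW21}.
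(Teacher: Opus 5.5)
Your proposal is correct and is essentially the Petersen--Wink argument that the paper cites without reproducing. It has the same four ingredients: the identity $g(Ric(\alpha),\alpha)=\sum_a\lambda_a|\Xi_a\alpha|^2$, the Casimir trace $k(m-k)|\alpha|^2$, a pointwise bound on $|\Xi_a\alpha|^2$, and placing the maximal weight on the $m-p$ smallest eigenvalues.

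The one thing to fix is the normalization in the key lemma. Your trace identity requires $\{e_i\wedge e_j\}_{i<j}$ to be orthonormal, and in that normalization a unit $\Xi=\sum_i\theta_i\,e_{2i-1}\wedge e_{2i}$ has $\sum_i\theta_i^2=1$, not $\tfrac12$. With that, the estimate $|\Xi\alpha|\le\sum_i|\theta_i|\,|(e_{2i-1}\wedge e_{2i})\alpha|$ controls the cross terms. Combine it with $\sum_i|(e_{2i-1}\wedge e_{2i})\alpha|^2\le\min(k,m-k)|\alpha|^2$ and Cauchy--Schwarz to get $|\Xi\alpha|^2\le\min(k,m-k)|\alpha|^2$. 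This is at most $\frac{k(m-k)}{m-p}|\alpha|^2$ for $k\le p$, which is exactly the bound your weighting step needs, so the argument goes through.
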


\begin{remark}
	In \cite[Lemma~2.1]{PW21} the estimate is proved under the same assumption
	$\kappa\le 0$, which is precisely the range relevant for almost nonnegativity
	conditions on the curvature operator. A closely related formulation, together
	with a complete proof in the context of curvature operators, can also be found
	in \cite[Corollary~2.3]{HT23}. In the present paper we will only apply
	Proposition~\ref{P1} with parameters $\kappa\le 0$.
\end{remark}

The first main theorem in \cite{PW21} introduces a family of nested curvature conditions that lead to various vanishing results for Betti numbers. Recall that the curvature operator is called \emph{$(n-l)$-nonnegative} (resp.\ \emph{$(n-l)$-positive}) if the sum of its first $n-l$ eigenvalues is nonnegative (resp.\ positive), i.e.,
\[
\lambda_1 + \cdots + \lambda_{n-l} \geq 0 \quad (\text{resp. } >0).
\]

\begin{theorem}[\cite{PW21}, Theorems~A and B]\label{thm:PW}
	Let $n \geq 3$ and $1 \leq l \leq \lfloor\frac{n}{2} \rfloor$. If $(X,g)$ is a closed $n$-dimensional Riemannian manifold with $(n-l)$-nonnegative curvature operator, then every harmonic $k$-form is parallel. Moreover, if the curvature operator is $(n-l)$-positive, then
	\[
	b_1(X) = \cdots = b_l(X) = 0 \quad \text{and} \quad b_{n-l}(X) = \cdots = b_{n-1}(X) = 0.
	\]
\end{theorem}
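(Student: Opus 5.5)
The plan is the Bochner technique, with Proposition~\ref{P1} supplying the pointwise curvature estimate; the case to keep in mind is degrees $1\le k\le l$ and $n-l\le k\le n-1$. Let $\alpha$ be a harmonic $k$-form with $k\le l$ or $k\ge n-l$. Integrating the Weitzenb\"ock formula $\Delta_d\alpha=\nabla^*\nabla\alpha+Ric(\alpha)$ against $\alpha$ over the closed manifold $X$ gives
\[
0=\int_X |\nabla\alpha|^2+\int_X g(Ric(\alpha),\alpha).
\]
By $(n-l)$-nonnegativity,
\[
\frac{\lambda_1+\cdots+\lambda_{n-l}}{n-l}\ge 0 .
\]
We then apply Proposition~\ref{P1} with $m=n$, $p=l\le\lfloor n/2\rfloor$ and $\kappa=0$, which is allowed since the proposition only requires $\kappa\le 0$. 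It gives $g(Ric(\alpha),\alpha)\ge 0$ pointwise. Both integrands are therefore nonnegative with zero total, so $\nabla\alpha=0$ and $g(Ric(\alpha),\alpha)\equiv 0$. This proves the first assertion, which I read as referring to the degrees covered by Proposition~\ref{P1}.

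For the positive case, I want a strict pointwise inequality that contradicts $Ric(\alpha)=0$ for a nonzero parallel $\alpha$. Since $X$ is compact and $\lambda_1+\cdots+\lambda_{n-l}$ is continuous and positive, it has a positive minimum $\mu$. Set $\delta=\mu/(n-l)>0$. Pointwise, consider the algebraic curvature tensor $R'=R-\delta R_1$, where $R_1$ is the constant curvature $1$ tensor, whose curvature operator is the identity on $\Lambda^2$. The tensor $R'$ still satisfies the Bianchi identity, its curvature operator has eigenvalues $\lambda_i-\delta$, and
\[
\frac{(\lambda_1-\delta)+\cdots+(\lambda_{n-l}-\delta)}{n-l}\ge 0 .
\]
The Weitzenb\"ock term is linear in the curvature tensor, and for $R_1$ it equals $k(n-k)\,\mathrm{id}$ on $k$-forms. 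Hence
\[
g(Ric_R(\alpha),\alpha)=g(Ric_{R'}(\alpha),\alpha)+\delta k(n-k)|\alpha|^2\ge \delta k(n-k)|\alpha|^2 .
\]
For $1\le k\le l$ or $n-l\le k\le n-1$ we have $k(n-k)>0$. So any harmonic form in these degrees vanishes, and Hodge theory gives $b_k(X)=0$. The upper range also follows from the lower one by Poincar\'e duality on the orientable double cover if needed.

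The main obstacle is justifying that Proposition~\ref{P1} applies to $R'$. This needs the estimate in \cite[Lemma~2.1]{PW21} to be purely algebraic, i.e.\ pointwise for any algebraic curvature tensor, rather than tied to a metric. I would check this in the proof in \cite{PW21} or \cite[Corollary~2.3]{HT23}, where it is indeed a linear-algebra statement about $\sum_\alpha \lambda_\alpha |\Xi_\alpha\alpha|^2$ in an eigenbasis $\Xi_\alpha$ of the curvature operator. If that route failed, I would fall back on using parallelism: $\alpha$ parallel forces $Ric(\alpha)=0$, and one analyzes the equality case of the Petersen--Wink inequality directly.
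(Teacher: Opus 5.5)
The paper gives no proof of this statement; it simply quotes Petersen--Wink. Your argument is the standard Bochner argument for it: integrate the Weitzenb\"ock formula and use the pointwise algebraic estimate of Proposition~\ref{P1} in degrees $k\le l$ and $k\ge n-l$. The proposal is correct, and your reading of the degrees in the first assertion is the right one.

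Your shift $R'=R-\delta R_1$ is a valid way to get strict positivity out of the $\kappa\le 0$ form of Proposition~\ref{P1} stated here, but it is not needed. Take an orthonormal eigenbasis $\{\Xi_\beta\}$ of the curvature operator. The weights $|\Xi_\beta\alpha|^2$ lie in $[0,k|\alpha|^2]$ and sum to $k(n-k)|\alpha|^2$, so $g(Ric(\alpha),\alpha)\ge k|\alpha|^2(\lambda_1+\cdots+\lambda_{n-k})$ for $k\le n/2$. This is pointwise positive for $\alpha\neq 0$ and $1\le k\le l$, because averages of the smallest eigenvalues increase with their number.
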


Following Petersen and Wink, we say that a closed $n$-dimensional smooth manifold $X$ admits an \emph{almost $(n-l)$-nonnegative curvature operator} (or \emph{$(n-l)$-ANCO} for short) if there exists a sequence of Riemannian metrics $\{g_i\}$ on $X$ such that
\[
\lambda_1(g_i) + \cdots + \lambda_{n-l}(g_i) \geq -\frac{n-l}{i}, \qquad \diam(X,g_i) \leq 1.
\]
If a sequence $\{g_i\}$ on a closed smooth manifold $X$ satisfies
\[
Ric(g_i) \geq -\frac{1}{i}g_i, \qquad \diam(X,g_i) \leq 1 \quad \text{for all } i,
\]
then we say that $(X, \{g_i\})$ has \emph{almost nonnegative Ricci curvature}. It is clear that an $(n-l)$-ANCO manifold has almost nonnegative Ricci curvature (cf.\ \cite{Ber61,Gro81,KPT10,KW11}).

Under almost nonnegative Ricci curvature and an additional hypothesis on the fundamental group, Chen, Ge, and Han obtained several vanishing theorems for genera \cite{CGH24}. Inspired by their work, we obtain the following result, which adapts their arguments to the curvature-operator setting by means of Proposition~\ref{P1}.

\begin{theorem}\label{thm:vanishing-genera}
	Let $X$ be a closed smooth $2n$-dimensional manifold with infinite fundamental group. Suppose there exists a sequence of Riemannian metrics $\{g_i\}$ on $X$ such that
	\begin{equation}\label{eq:sequence}
		\left( \lambda_1(g_i) + \cdots + \lambda_n(g_i) \right) \cdot \diam^2(X,g_i) \geq -\frac{n}{i},
	\end{equation}
	for all $i$. Then $\chi(X) = 0$ and $\sigma(X) = 0$. Moreover, if $X$ is spin, then $\widehat{A}(X) = 0$.
\end{theorem}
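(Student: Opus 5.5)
We follow the strategy of Chen--Ge--Han \cite{CGH24}: pass to finite covers, and use the almost nonnegativity of the curvature operator to control the relevant Dirac-type operators there. After rescaling we may assume $\diam(X,g_i)=1$, so \eqref{eq:sequence} reads $\lambda_1(g_i)+\cdots+\lambda_n(g_i)\ge -n/i$. In particular $(X,\{g_i\})$ has almost nonnegative Ricci curvature, since each Ricci eigenvalue is a sum of $2n-1$ sectional curvatures, each dominated below by sums of curvature-operator eigenvalues.

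\textbf{Step 1: reduce to an index-theoretic statement on covers.} Each of $\chi,\sigma,\widehat A$ is the index of a natural operator on $X$: the de Rham operator $d+d^*$ on $\Lambda^{\mathrm{even}}\to\Lambda^{\mathrm{odd}}$, the signature operator, and the spin Dirac operator. All three are multiplicative under finite covers, so $\chi(\tilde X)=k\chi(X)$ and similarly for $\sigma,\widehat A$ when $\tilde X\to X$ is a $k$-sheeted cover. By Kapovitch--Wilking and Kapovitch--Petrunin--Tuschmann \cite{KW11,KPT10}, a manifold with almost nonnegative Ricci curvature has virtually nilpotent fundamental group. Since $\pi_1(X)$ is infinite, it therefore has a finite-index subgroup that surjects onto $\mathbb Z$. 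Hence $X$ admits finite cyclic covers $X_k\to X$ of arbitrarily large degree $k$, and we equip $X_k$ with the pulled-back metrics $g_i$.

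\textbf{Step 2: bound the indices on $X_k$.} On $X_k$ the pulled-back metric still satisfies $\lambda_1+\cdots+\lambda_n\ge -n/i$ pointwise. By Proposition~\ref{P1} with $m=2n$ and $p=n$, every form $\alpha$ of any degree satisfies $g(Ric(\alpha),\alpha)\ge -\tfrac1i\,k(2n-k)|\alpha|^2\ge -\tfrac{n^2}{i}|\alpha|^2$. For the spinor case we use the Lichnerowicz formula with $\mathrm{scal}\ge -C(n)/i$. Both are Bochner formulas of the form $D^2=\nabla^*\nabla+\mathcal R$ with $\mathcal R\ge -C(n)/i$.

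Following \cite{CGH24}, the number of harmonic sections, and hence each index, on $X_k$ is bounded by the dimension of the space of sections $s$ with $\|\nabla s\|^2\le (C/i)\|s\|^2$. Take $X_k$ with diameter of order $k$ and choose $i\gg k^2$. A Li-type mean-value/Moser iteration then bounds such sections by a constant $N(n)$ independent of $k$. This iteration needs a Sobolev or volume-doubling inequality, which we get from Bishop--Gromov under $Ric\ge -(2n-1)/i$ at scales $\lesssim \sqrt i$.

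Consequently $k|\chi(X)|=|\chi(X_k)|\le N(n)$, and likewise for $\sigma$ and $\widehat A$. Letting $k\to\infty$ gives $\chi(X)=\sigma(X)=0$ and, in the spin case, $\widehat A(X)=0$.

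\textbf{Main obstacle.} The hard step is the uniform bound $N(n)$ on the dimension of the almost-harmonic spaces on the large covers. The coupling between $i$ and $k$ is delicate, since the cover's diameter grows with $k$ while the curvature lower bound must be small on that scale. This is the core of the Chen--Ge--Han argument. We would import it verbatim, noting that it uses only a Bochner lower bound of the form above together with the Ricci lower bound, both of which Proposition~\ref{P1} supplies.
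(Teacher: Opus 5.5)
Your proof is correct, but its decisive step differs from the paper's. Both arguments use Proposition~\ref{P1} with $m=2n$, $p=n$ to get a Weitzenb\"ock lower bound $-C(n)/i$ on forms of every degree.

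The paper then lifts the de Rham and signature operators to the universal cover. Deferring the analysis to Chen--Ge--Han, it asserts that their $L^2$-indices vanish, which by Atiyah's $L^2$-index theorem forces $\chi(X)=\sigma(X)=0$. For $\widehat{A}$ it only extracts $Ric(g_i)\ge -C'(n)g_i/i$ (Proposition~\ref{P1} with $k=1$) and cites Chen--Ge--Han's theorem for almost nonnegative Ricci curvature.

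You instead stay on closed manifolds:
\begin{enumerate}
\item[(1)] Kapovitch--Wilking makes $\pi_1(X)$ virtually nilpotent, so, being infinite, it has finite-index subgroups of arbitrarily large index.
\item[(2)] $\chi$, $\sigma$, $\widehat{A}$ are multiplicative under finite covers.
\item[(3)] On a fixed $d$-sheeted cover, a Gallot/Li-type Moser-iteration bound caps all kernel dimensions by $N(n)$ once $i\gg d^2$. Hence $d\,|\chi(X)|\le N(n)$, and likewise for $\sigma$, and for $\widehat{A}$ via Lichnerowicz with $\mathrm{scal}(g_i)\ge -C(n)/i$.
\end{enumerate}
Your route is analytically elementary, treats the three genera uniformly, and needs only one cover of degree exceeding $N(n)$. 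Its price is the deep Kapovitch--Wilking theorem, since an infinite $\pi_1$ alone need not have any proper finite-index subgroups. The $L^2$ route can in principle work with the infiniteness of $\pi_1(X)$ alone, at the cost of von Neumann-dimension machinery that the paper does not spell out.

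Three details to tighten:
\begin{enumerate}
\item[(i)] Since $d$ is fixed before $i$, you need $\diam(X_d,g_i)$ bounded independently of $i$. Any connected $d$-sheeted cover satisfies $\diam(X_d,g_i)\le 2d\,\diam(X,g_i)$: the closed $\diam(X,g_i)$-balls about the $d$ lifts of a point cover $X_d$, and connectedness chains them.
\item[(ii)] The set $\{s:\|\nabla s\|^2\le (C/i)\|s\|^2\}$ is not a linear space. Apply Li's lemma to the space of harmonic sections itself, whose elements satisfy the pointwise inequality $\Delta|s|^2\ge -2(C/i)|s|^2$ that Moser iteration requires. The iteration runs via Gallot's Sobolev inequality, or volume doubling together with Buser's Poincar\'e inequality.
\item[(iii)] Your hypothesis does not bound individual sectional curvatures from below, since only $\lambda_1+\cdots+\lambda_n$ is controlled. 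The Ricci bound should instead come from Ky Fan's inequality applied to the orthonormal bivectors $e\wedge e_j$: $Ric(e,e)\ge\lambda_1+\cdots+\lambda_{2n-1}\ge\frac{2n-1}{n}(\lambda_1+\cdots+\lambda_n)$, normalizing so that $K(e,e_j)=\langle\mathfrak{R}(e\wedge e_j),e\wedge e_j\rangle$. Alternatively, use Proposition~\ref{P1} with $k=1$ as the paper does.
\end{enumerate}
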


\begin{proof}
	The argument follows that of \cite[Theorem~1.1]{CGH24}, with Proposition~\ref{P1} providing the required curvature estimates.
	
	By rescaling, we may assume that
	\[
	\diam(X,g_i) = 1 \quad \text{for all } i.
	\]
	Then the assumption~\eqref{eq:sequence} becomes
	\[
	\lambda_1(g_i) + \cdots + \lambda_n(g_i) \geq -\frac{n}{i}.
	\]
	Since $\dim X = 2n$, we can apply Proposition~\ref{P1} with $m = 2n$ and $p = n$. For each $i$ we define
	\[
	\kappa_i := \frac{\lambda_1(g_i) + \cdots + \lambda_n(g_i)}{n} \ge -\frac{1}{i},
	\]
	so that in particular $\kappa_i \le 0$ for all $i$. Hence Proposition~\ref{P1} yields
	\[
	g(Ric(g_i)(\alpha), \alpha) \;\ge\; \kappa_i\, k(2n-k)\,|\alpha|^2
	\]
	for any $\alpha \in \Omega^k(X)$ with $k \leq n$ or $k \geq n$. In particular, for such $k$ we have the uniform estimate
	\[
	g(Ric(g_i)(\alpha), \alpha) \;\ge\; -\frac{C(n)}{i}\,|\alpha|^2,
	\]
	where $C(n)>0$ depends only on the dimension (for instance, one may take
	$C(n) = \max_{1\le k\le 2n} k(2n-k)$).
	
	Consider the operator
	\[
	d + d^* : \Omega^{\text{even}}(X) \to \Omega^{\text{odd}}(X),
	\]
	whose index equals the Euler characteristic of $X$, and, in dimension $4n$, the operator
	\[
	d + d^* : \Lambda^+ TX \to \Lambda^- TX,
	\]
	whose index equals the signature of $X$. The Weitzenb\"ock formulas for the corresponding Hodge Laplacians express
	\[
	(d + d^*)^2 = \nabla^* \nabla + \mathcal{R},
	\]
	where the curvature term $\mathcal{R}$ is a linear combination of $Ric$ and the full curvature operator acting on differential forms. The lower bound above on $g(Ric(g_i)(\alpha),\alpha)$ for $k\le n$ and $k\ge n$ implies that the curvature term $\mathcal{R}$ satisfies
	\[
	\langle \mathcal{R}_{g_i}\beta, \beta\rangle \;\ge\; -\frac{C(n)}{i}\,|\beta|^2
	\]
	for all $\beta$ in the relevant form bundles, with the same constant $C(n)$. This is exactly the curvature inequality needed in the index-theoretic argument of \cite[Theorem~1.1]{CGH24}. Using the almost nonnegativity of the curvature term and the fact that $\pi_1(X)$ is infinite, one shows that the $L^2$-indices of the lifted operators on the universal cover vanish, which forces
	\[
	\chi(X) = 0 \quad \text{and} \quad \sigma(X) = 0.
	\]
	We refer to \cite[Section~2]{CGH24} for the detailed analytic implementation of this argument.
	
	If $X$ is spin, the conclusion for the $\widehat{A}$-genus follows from \cite[Theorem~1.2]{CGH24}. Indeed, applying Proposition~\ref{P1} with $m=2n$, $p=n$ and $k=1$, we obtain
	\[
	g(Ric(g_i)(\alpha),\alpha) \;\ge\; \kappa_i\,(2n-1)\,|\alpha|^2 \;\ge\; -\frac{C'(n)}{i}\,|\alpha|^2
	\]
	for all $1$-forms $\alpha$, where $C'(n) > 0$ depends only on $n$. Equivalently,
	\[
	Ric(g_i) \;\ge\; -\frac{C'(n)}{i}\, g_i.
	\]
	Thus the sequence $\{g_i\}$ has almost nonnegative Ricci curvature in the sense of \cite{CGH24}, and \cite[Theorem~1.2]{CGH24} implies that $\widehat{A}(X) = 0$.
\end{proof}

\section{Eigenvalues of curvature operators with a uniform upper bound}
	
	In this section we combine basic matrix estimates with a Cheeger--Gromov type compactness theorem to prove Theorem~\ref{thm:main}. We begin with some linear algebraic preliminaries.
	
	Let $A$ be an $n \times n$ Hermitian matrix. Its operator norm is defined by
	\[
	\|A\| = \sup_{|x|=1} |\langle x, Ax \rangle|.
	\]
	Another useful norm is the Frobenius norm:
	\[
	\|A\|_2 = \left( \sum_{i,j=1}^n |a_{ij}|^2 \right)^{1/2} = \left( \operatorname{tr}(A^*A) \right)^{1/2}.
	\]
	These two norms satisfy
	\[
	\|A\| \le \|A\|_2 \le \sqrt{n} \,\|A\|.
	\]
	
	We also recall Weyl's perturbation theorem for Hermitian matrices.
	
	\begin{theorem}[\cite{Bha97}, Theorem~VI.2.1]\label{T3}
		Let $A$ and $B$ be Hermitian matrices with eigenvalues $\lambda_1(A) \ge \cdots \ge \lambda_n(A)$ and $\lambda_1(B) \ge \cdots \ge \lambda_n(B)$, respectively. Then
		\[
		\max_j |\lambda_j(A) - \lambda_j(B)| \le \|A - B\|.
		\]
	\end{theorem}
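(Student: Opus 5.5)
The plan is to derive Weyl's perturbation theorem from the Courant--Fischer min-max characterization of eigenvalues of Hermitian matrices. With the eigenvalues ordered decreasingly, as in Theorem~\ref{T3}, this characterization reads
\[
\lambda_j(A) \;=\; \max_{\substack{V\subset \mathbb{C}^n\\ \dim V = j}} \;\min_{\substack{x\in V\\ |x|=1}} \langle x, Ax\rangle ,
\]
and likewise for $B$. I take this as a standard fact. If needed, it is proved by choosing $V$ spanned by the top $j$ eigenvectors, which gives ``$\ge$''. For ``$\le$'', any $j$-dimensional $V$ meets the span of the bottom $n-j+1$ eigenvectors nontrivially by a dimension count, and on that span $\langle x,Ax\rangle\le\lambda_j(A)$.

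The key pointwise inequality comes directly from the definition of the operator norm used in the paper, $\|A-B\|=\sup_{|x|=1}|\langle x,(A-B)x\rangle|$. For every unit vector $x$ it gives
\[
\langle x, Ax\rangle \;\le\; \langle x, Bx\rangle + \|A-B\| .
\]
Fix a $j$-dimensional subspace $V$ and take the minimum over unit $x\in V$ on both sides. This yields $\min_{V}\langle x,Ax\rangle \le \min_V\langle x,Bx\rangle + \|A-B\|$. Taking the maximum over all such $V$ and applying Courant--Fischer then gives $\lambda_j(A)\le \lambda_j(B)+\|A-B\|$.

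Exchanging the roles of $A$ and $B$, and using $\|B-A\|=\|A-B\|$, gives the reverse inequality $\lambda_j(B)\le\lambda_j(A)+\|A-B\|$. Together these show $|\lambda_j(A)-\lambda_j(B)|\le\|A-B\|$ for each $j$, and taking the maximum over $j$ proves the theorem.

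The only step requiring care is the min-max characterization itself, specifically the dimension-counting intersection argument. Everything else is a monotonicity argument that passes through the max and the min. Since the paper's norm is defined via the quadratic form rather than $\sup|Ax|$, no extra step is needed to relate the two: for Hermitian matrices they coincide, and the quadratic-form version is exactly what the argument uses.
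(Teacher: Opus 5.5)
Your proof is correct, and it follows essentially the same route as the source: the paper gives no proof of its own and simply cites Bhatia, where the result is derived from the Courant--Fischer minimax principle in essentially the way you do it. Your remark that only the quadratic-form definition $\|A-B\|=\sup_{|x|=1}|\langle x,(A-B)x\rangle|$ is needed is also accurate.
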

	
	Next we recall a Cheeger--Gromov type convergence theorem due to Kasue.
	
	\begin{theorem}[\cite{Kas89}]\label{thm:Cheeger-Gromov}
		Let $(X_i, g_i)$ be a sequence of closed Riemannian manifolds such that
		\[
		|K(g_i)| \le \Lambda, \quad \operatorname{Vol}(g_i) \ge v > 0, \quad \operatorname{diam}(X_i,g_i) \le D,
		\]
		where the constants $\Lambda, v, D$ are independent of $i$. Then, after passing to a subsequence, $(X_i, g_i)$ converges to a metric space $X$ such that:
		\begin{enumerate}
			\item[(i)] $X$ is a differentiable manifold;
			\item[(ii)] for all sufficiently large $i$, there exists a diffeomorphism $f_i : X \to X_i$;
			\item[(iii)] the pullback metrics $f_i^* g_i$ converge in the $C^{1,\alpha}$-topology (respectively, $L^p_2$-topology) to a $C^{1,\alpha}$ (resp.\ $L^p_2$) Riemannian metric $g_\infty$ on $X$, for any $\alpha \in (0,1)$ (resp.\ $p > 1$).
		\end{enumerate}
	\end{theorem}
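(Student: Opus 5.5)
We prove Theorem~\ref{thm:Cheeger-Gromov} along the standard Cheeger--Gromov--Peters route. Everything rests on uniform local coordinate control derived from the three hypotheses. Throughout, $m=\dim X_i$; passing to a subsequence we may assume it is constant, since it is a priori unbounded, and the plan only needs it fixed along the subsequence. With that, every constant below depends on $m,\La,v,D$ only.

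\textbf{Step 1: injectivity radius.} By Bishop--Gromov, $|K(g_i)|\le\La$ and $\diam\le D$ give a uniform upper volume bound and a lower bound on the volume of small balls, $\operatorname{Vol}B(x,r)\ge c(m,\La,D,v)\,r^m$ for $r\le D$. Cheeger's lemma then gives $\operatorname{inj}(g_i)\ge i_0(m,\La,v,D)>0$. At the same time Gromov's precompactness theorem gives a subsequence converging in the Gromov--Hausdorff sense to a compact metric space $X$, and Bishop--Gromov bounds the number $N$ of $r_0$-balls needed to cover each $X_i$ uniformly.

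\textbf{Step 2: harmonic coordinates.} On each ball $B(x,r_0)$ with $r_0=r_0(m,\La,v,D)\le i_0/2$ we construct harmonic coordinates, following Jost--Karcher: solve the Dirichlet problem for $\Delta u^k=0$ with boundary data the normal coordinates, and control $u^k$ minus the normal coordinates via Rauch comparison. In these coordinates the metric coefficients satisfy $\tfrac12\delta\le g_{kl}\le 2\delta$ and $\|g_{kl}\|_{C^{1,\alpha}}\le C$. The Ricci identity $\Delta g_{kl}=-2\,\mathrm{Ric}_{kl}+Q(g,\partial g)$ is elliptic with bounded right-hand side. Hence $L^p$ elliptic estimates give $\|g_{kl}\|_{L^p_2}\le C(p)$, and Schauder-type estimates give $C^{1,\alpha}$ bounds for every $\alpha<1$.

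\textbf{Step 3: extraction of the limit and diffeomorphisms.} Fix a maximal $r_0/4$-separated set $\{x^i_1,\dots,x^i_N\}$ in $X_i$. After passing to a subsequence the centres converge in the Gromov--Hausdorff sense, and the transition maps between overlapping harmonic charts converge. These transition maps are harmonic-to-harmonic, so they are bounded in $C^{2,\alpha}$ (respectively $L^p_3$). By Arzel\`a--Ascoli the chart data converge in $C^{1,\beta}$ for all $\beta<\alpha$, respectively weakly in $L^p_2$. This produces a $C^{2,\alpha}$ atlas on $X$ with a $C^{1,\alpha}$ (respectively $L^p_2$) metric $g_\infty$ inducing the limit distance, which proves (i).

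For (ii) and (iii), the local maps $\phi^i_j\circ(\phi^\infty_j)^{-1}$ nearly agree on overlaps. We glue them into a global map $f_i:X\to X_i$ by a Riemannian centre-of-mass construction with a partition of unity. This is possible because $\operatorname{inj}\ge i_0$ and the sectional curvatures are bounded, so centres of mass are well defined and depend smoothly on the data. For large $i$ the maps $f_i$ are $C^1$-close to local isometries, hence immersions of degree one onto compact connected targets, hence diffeomorphisms. Finally $f_i^*g_i\to g_\infty$ in $C^{1,\alpha}$ (respectively $L^p_2$) in these coordinates; this is exact for $C^{1,\beta}$ and weak for $L^p_2$, and the stated exponents follow by interpolation.

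\textbf{Main obstacle.} I expect the main difficulty to be Step 2. The key point is that $|K|\le\La$ alone gives only $C^{0}$ control of the metric in normal coordinates, while the harmonic radius estimate upgrades this to $C^{1,\alpha}$ and $L^p_2$ control with constants independent of $i$. I would first try the Jost--Karcher construction together with elliptic regularity for the Ricci identity. If the constants prove hard to make uniform, I would fall back on a blow-up contradiction argument in the spirit of Anderson: rescale so that the harmonic radius tends to $1$; the sectional curvatures then tend to $0$, and the limit would be flat $\mathbb R^m$ with harmonic radius $1$, which is impossible. The gluing in Step 3 is technical but standard.
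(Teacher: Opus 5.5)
The paper gives no proof of this statement; it quotes it from Kasue. So the benchmark is the standard harmonic-coordinate proof (Jost--Karcher, Peters, Greene--Wu, Kasue). Your Steps 1--2 and your construction of the limit atlas follow that route and are fine in outline.

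The genuine gap is in (iii). You glue the local maps $F_j=(\phi^i_j)^{-1}\circ\phi^\infty_j\colon U_j\to X_i$ by a Riemannian centre of mass in $(X_i,g_i)$, and then assert $f_i^*g_i\to g_\infty$ in $C^{1,\alpha}$. Since $f_i^*g_i$ involves $Df_i$, this needs $f_i$ to be bounded and convergent in $C^{2,\alpha}$ when written in harmonic charts. That each centre of mass is smooth for fixed $i$ does not help; you need bounds uniform in $i$. The first derivatives of $(p,q)\mapsto\exp_p^{-1}q$ are governed by Jacobi fields, hence by $|K|\le\Lambda$. Its second and third derivatives, however, involve $\nabla R$; in harmonic coordinates this means $\partial\Gamma$ and $\partial^2\Gamma$ of $g_i$. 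Your hypotheses do not bound these at all, and they can blow up in $i$ faster than the mismatch of the $F_j$ on overlaps tends to zero. So your construction gives uniform $C^1$ control of $f_i$, i.e.\ $f_i^*g_i\to g_\infty$ in $C^0$. That is exactly the Lipschitz convergence of Gromov--Katsuda, which is all the Riemannian centre-of-mass method yields. It gives neither $C^{1,\alpha}$ nor $L^p_2$ convergence.

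To close the gap, glue using only data with uniform $C^{2,\alpha}$ control, namely harmonic transition maps and cut-offs built from harmonic coordinates. One way:
\begin{itemize}
\item glue the maps $G_j=(\phi^\infty_j)^{-1}\circ\phi^i_j\colon X_i\to X$ by a centre of mass for a fixed smooth background metric on $X$, so that the centre-of-mass map does not depend on $i$;
\item use a partition of unity on $X_i$ made from cut-offs in the harmonic coordinates;
\item call the glued map $h_i$ and set $f_i=h_i^{-1}$.
\end{itemize}
Alternatively, use the standard embedding $(\eta_j\phi^i_j,\eta_j)_j$ of $X_i$ into $\mathbb{R}^{N(m+1)}$ together with a projection that does not lose derivatives.

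Smaller points:
\begin{itemize}
\item The dimension is not a priori unbounded. Bishop's inequality $v\le\operatorname{Vol}(X_i)\le V^m_{-\Lambda}(D)$, whose right side tends to $0$ as $m\to\infty$, bounds $m$, and that is what lets you pass to a constant-dimension subsequence.
\item With only an $L^\infty$ right-hand side, Schauder estimates do not apply. The $C^{1,\alpha}$ bounds for all $\alpha<1$ come from the $L^p_2$ bounds via Morrey's embedding.
\item ``Degree one'' has no meaning for non-orientable targets. Injectivity of the local diffeomorphism $f_i$ should instead come from $f_i$ being a Gromov--Hausdorff approximation.
\item In the $L^p_2$ clause only weak convergence holds in general. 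Take a surface with metric $g$ and set $u_i=i^{-2}\eta\sin(ix_1)$ in a chart. The metrics $e^{2u_i}g$ have bounded curvature and converge in $C^{1,\alpha}$, but their curvatures converge only weakly, and no choice of diffeomorphisms repairs this. Interpolation gives strong convergence only in $L^p_s$ for $s<2$, so state that part as weak convergence.
\end{itemize}
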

	
	For later use we record the following consequence of Theorem~\ref{thm:Cheeger-Gromov} and standard elliptic regularity, formulated in our present notation.
	
	\begin{lemma}\label{lem:curvature-convergence}
		Let $(X_i,g_i)$ be a sequence of closed Riemannian $n$-manifolds satisfying
		\[
		|K(g_i)| \le \Lambda, \qquad \operatorname{Vol}(X_i,g_i) \ge v_0 > 0,
		\qquad \operatorname{diam}(X_i,g_i) \le D,
		\]
		with constants $\Lambda,v_0,D$ independent of $i$. Suppose that, after passing to a subsequence and pulling back by diffeomorphisms, $g_i$ converges in the $C^{1,\alpha}$-topology to a Riemannian metric $g_\infty$ on a fixed manifold $X$, for all $0<\alpha<1$. Then the following hold:
		\begin{enumerate}
			\item[(a)] For every $p\in[1,\infty)$, we may (after passing to a further subsequence if necessary) assume that
			\[
			g_i \to g_\infty \quad\text{in the Sobolev space }L^p_2(X),
			\]
			that is, $g_i$ and its weak derivatives up to order two converge to those of $g_\infty$ in $L^p$.
			\item[(b)] Let $K_i$ and $K_\infty$ denote the curvature tensors of $g_i$ and $g_\infty$, respectively. Then $K_\infty\in L^p(X)$ for all $p>1$, and
			\[
			K_i \to K_\infty \quad\text{in }L^p(X)
			\]
			for every $p\ge1$.
		\end{enumerate}
	\end{lemma}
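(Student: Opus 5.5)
The plan is to take the strong $L^p_2$ convergence directly from Theorem~\ref{thm:Cheeger-Gromov}(iii), to check that its limit is the given $C^{1,\alpha}$ limit $g_\infty$, and then to obtain (b) from the coordinate formula for the curvature tensor. The step I do not prove myself is the strong convergence in (a): I import it from Kasue, and the argument below does not reprove it. The hypotheses of the lemma are exactly those of Theorem~\ref{thm:Cheeger-Gromov}, and I use the diffeomorphisms $f_i$ furnished there, so that (iii) applies to the same pulled-back sequence $f_i^*g_i$, still written $g_i$.

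\textbf{Step 1: part (a).} Theorem~\ref{thm:Cheeger-Gromov}(iii) gives, after a subsequence, $g_i\to \tilde g$ in $L^p_2$ for a given $p>1$. By Sobolev embedding, $L^p_2$ convergence for large $p$ implies $C^0$ convergence. Likewise $C^{1,\alpha}$ convergence implies $C^0$ convergence. Limits in $C^0$ are unique, so $\tilde g=g_\infty$.

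To get all $p$ at once, I take a diagonal subsequence along $p=2,3,4,\dots$. On the compact manifold $X$, convergence in $L^q$ implies convergence in $L^p$ for $p\le q$ by Hölder's inequality. Hence the diagonal subsequence converges in $L^p_2$ for every $p\in[1,\infty)$. Concretely, in a finite atlas of (harmonic) coordinate charts the components $g_{i,ab}$ converge to $g_{\infty,ab}$ in $C^{1,\alpha}$, and $\partial^2 g_{i,ab}\to\partial^2 g_{\infty,ab}$ in $L^p$.

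\textbf{Step 2: part (b).} In a chart I write the curvature schematically as
\[
K = g^{-1}\ast\partial^2 g + g^{-1}\ast g^{-1}\ast \partial g\ast\partial g .
\]
The three ingredients behave as follows.
\begin{itemize}
\item[(1)] $g_i^{-1}\to g_\infty^{-1}$ uniformly, since the $g_i$ are uniformly positive definite and converge in $C^0$.
\item[(2)] $\partial g_i\to\partial g_\infty$ uniformly, by the $C^{1,\alpha}$ convergence.
\item[(3)] $\partial^2 g_i\to \partial^2 g_\infty$ in $L^p$, by Step 1.
\end{itemize}
A product of a uniformly convergent, uniformly bounded factor with an $L^p$-convergent factor converges in $L^p$. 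Applying this term by term gives $K_i\to K_\infty$ in $L^p$ on each chart, where $K_\infty$ is given by the same formula for $g_\infty$. Summing over the finite atlas gives convergence on $X$.

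Since $g_\infty\in L^p_2$ for all $p$, the formula shows $K_\infty\in L^p$ for all $p>1$. In fact $K_\infty\in L^\infty$ with $|K_\infty|\le\Lambda$: the bound $|K_i|\le\Lambda$ passes to the $L^p$ limit, because an a.e.\ convergent subsequence exists.

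\textbf{Main obstacle.} The genuine difficulty lies in (a), and it is exactly the part I do not prove here. The bound $|K|\le\Lambda$ together with the harmonic-coordinate equation $\Delta g_{ab}=-2\mathrm{Ric}_{ab}+Q(g,\partial g)$ gives only uniform $L^p_2$ bounds. By Rellich compactness these yield strong $C^{1,\alpha}$ convergence, but only weak $L^p_2$ convergence. Upgrading to strong convergence of second derivatives is not automatic from curvature bounds. I would therefore rely on the $L^p_2$ conclusion of Theorem~\ref{thm:Cheeger-Gromov}(iii), as the paper does, rather than derive it independently.
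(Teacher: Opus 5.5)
There is a genuine gap, and it cannot be filled. Your conditional reasoning is sound: if $g_i\to g_\infty$ strongly in $L^p_2$, then Sobolev embedding identifies the limit, a diagonal subsequence handles all $p$, and the schematic formula $K=g^{-1}\ast\partial^2 g+g^{-1}\ast g^{-1}\ast\partial g\ast\partial g$ gives (b). This is also all the paper's remark offers; it cites harmonic coordinates and notes that curvature is algebraic in second derivatives, which presupposes the same strong convergence.

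The problem is the step you import: strong convergence in (a) is not merely unproved, it is false. As you observe in your last paragraph, the harmonic-coordinate estimates (Anderson, Kasue) give only uniform $L^p_2$ bounds. These yield $C^{1,\alpha}$ convergence and merely \emph{weak} $L^p_2$ convergence, and nothing upgrades this. So Theorem~\ref{thm:Cheeger-Gromov}(iii) is only correct when read in the weak $L^p_2$ topology, and relying on it ``as the paper does'' inherits the paper's error rather than closing the gap.

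Here is a counterexample. On $T^2=\mathbb{R}^2/2\pi\mathbb{Z}^2$ with flat metric $g_0$, let $g_i=e^{2u_i}g_0$ with $u_i=i^{-2}\sin(ix)\sin(iy)$. Then $K(g_i)=-e^{-2u_i}\Delta_0u_i=2e^{-2u_i}\sin(ix)\sin(iy)$, so $|K(g_i)|\le 2e^2$. The volumes tend to $4\pi^2$, the diameters are bounded, and $g_i\to g_0$ in $C^{1,\alpha}$ for every $\alpha<1$, since $\|e^{2u_i}-1\|_{C^{1,\alpha}}=O(i^{\alpha-1})$. Thus the hypotheses hold with identity diffeomorphisms, $g_\infty=g_0$ and $K_\infty=0$. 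Yet
\[
\int_{T^2}|K(g_i)|^p\,d\mathrm{Vol}_{g_i}\longrightarrow 2^p\Bigl(\int_0^{2\pi}|\sin t|^p\,dt\Bigr)^2>0
\]
for every $p\ge 1$, so no subsequence satisfies $K_i\to K_\infty$ in $L^p$. Correspondingly, $\partial_x\partial_y g_i=\bigl(2\cos(ix)\cos(iy)+O(i^{-2})\bigr)e^{2u_i}g_0$ converges weakly to $0$ but has $L^p$ norm bounded below, so (a) fails too. Changing the diffeomorphisms does not help: $\int|K|^p\,d\mathrm{Vol}$ is invariant under pullback, and any $C^{1,\alpha}$ limit here is isometric to the flat torus. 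Taking products with flat tori gives examples in every dimension.

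What is true is $g_i\rightharpoonup g_\infty$ weakly in $L^p_2$ and $K_i\rightharpoonup K_\infty$ weakly in $L^p$. The bound $|K_\infty|\le\Lambda$ survives, because for each fixed $2$-vector it is a linear inequality. This also breaks the pointwise Weyl/a.e.\ argument in the proof of Theorem~\ref{thm:main}. That proof would have to use instead that $\mathfrak{R}\ge-\varepsilon_i$ is likewise a convex condition stable under weak limits.
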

	
	\begin{remark}
		The proof uses harmonic coordinates and elliptic estimates for the metric
		components, together with the uniform curvature and volume bounds; the
		convergence of the curvature tensors then follows from the fact that the
		curvature is an algebraic expression in the second derivatives of the metric.
		We refer for instance to Anderson~\cite[Section~2]{And90} and to the discussion
		in Lott~\cite[Section~2]{Lot00} for the detailed analytic background and omit
		the proof here.
	\end{remark}

	We now show that a uniform bound on the eigenvalues of the curvature operator yields a uniform lower bound on the volume, provided the Euler characteristic is nonzero.
	
	\begin{lemma}\label{L1}
		Let $X$ be a closed smooth $2n$-dimensional Riemannian manifold with $\chi(X)\neq 0$. Suppose there exists a constant $\Lambda > 0$ such that the eigenvalues of the curvature operator satisfy
		\[
		-\Lambda \le \lambda_1 \le \cdots \le \lambda_{n(2n-1)} \le \Lambda.
		\]
		Then there exists $v = v(n,\Lambda) > 0$ such that $\operatorname{Vol}(X,g) \ge v$.
	\end{lemma}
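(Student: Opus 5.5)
The plan is to use the Chern--Gauss--Bonnet formula. Write
\[
\chi(X)=\int_X \mathrm{Pf}(\Omega_g)\, d\mathrm{vol}_g ,
\]
where $\mathrm{Pf}(\Omega_g)$ is the Pfaffian of the curvature form, suitably normalized by $(2\pi)^{-n}$ and combinatorial factors. The key point is that the Gauss--Bonnet integrand is a homogeneous polynomial of degree $n$ in the components $R_{ijkl}$ of the curvature tensor. So a pointwise bound on the curvature tensor gives a pointwise bound on the integrand, and integrating bounds $|\chi(X)|$ by a constant times the volume. Since $\chi(X)$ is a nonzero integer, $|\chi(X)|\ge 1$, and this inverts to a volume lower bound.

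The steps in order are as follows.

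First, translate the eigenvalue bound into a bound on the curvature components. The curvature operator $\mathfrak R$ is self-adjoint on $\Lambda^2T_xX$, which has dimension $N=n(2n-1)$. The hypothesis $-\Lambda\le\lambda_j\le\Lambda$ gives $\|\mathfrak R\|\le\Lambda$ in operator norm. By the norm comparison recalled in this section, the Frobenius norm then satisfies $\|\mathfrak R\|_2\le\sqrt N\,\Lambda$. In an orthonormal frame $\{e_i\}$, with the basis $\{e_i\wedge e_j\}_{i<j}$, the entries of $\mathfrak R$ are exactly the $R_{ijkl}$ up to a fixed normalization constant. Hence each component satisfies $|R_{ijkl}|\le C(n)\Lambda$, and in particular the sectional curvatures satisfy $|K|\le\Lambda$.

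Second, bound the integrand pointwise. Expanding the Pfaffian gives
\[
\mathrm{Pf}(\Omega)=\frac{1}{(8\pi)^n n!}\sum_{\sigma,\tau\in S_{2n}}\mathrm{sgn}(\sigma)\mathrm{sgn}(\tau)\prod_{s=1}^n R_{\sigma(2s-1)\sigma(2s)\tau(2s-1)\tau(2s)} ,
\]
up to the exact convention. This is a sum of $((2n)!)^2$ terms, each a product of $n$ components. It follows that $|\mathrm{Pf}(\Omega_g)|\le C'(n)\Lambda^n$ pointwise.

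Third, integrate and conclude:
\[
1\le|\chi(X)|\le C'(n)\Lambda^n\operatorname{Vol}(X,g),
\]
so we may take $v(n,\Lambda)=\bigl(C'(n)\Lambda^n\bigr)^{-1}$.

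I do not expect a real obstacle. The only care needed is normalization bookkeeping: the convention relating $\mathfrak R$ to $R_{ijkl}$ (a possible factor of $2$ from summing over all $k,l$), and the constants in the Pfaffian. Neither affects the argument, since all such constants depend only on $n$. No diameter bound is needed for this lemma; that hypothesis will enter only later, in the compactness argument via Theorem~\ref{thm:Cheeger-Gromov}.
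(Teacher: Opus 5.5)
Your proposal is correct and follows essentially the same route as the paper: you convert the eigenvalue bound into a pointwise bound on the curvature tensor, bound the Chern--Gauss--Bonnet integrand by a constant depending only on $n$ and $\Lambda$, and integrate. You also make explicit two points the paper leaves implicit, namely that $|\chi(X)|\ge 1$ by integrality and the resulting explicit form $v=(C'(n)\Lambda^n)^{-1}$.
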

	
	\begin{proof}
		Recall that the curvature operator $\mathfrak{R}$ is symmetric, since
		\[
		g(\mathfrak{R}(e_i \wedge e_j), e_k \wedge e_l) = R(e_i, e_j, e_k, e_l) = R(e_k, e_l, e_i, e_j)
		\]
		for any local orthonormal frame $\{e_i\}$. Hence all eigenvalues of $\mathfrak{R}$ are real, and
		\[
		\|\mathfrak{R}\|_2 \le \sqrt{n(2n-1)} \,\|\mathfrak{R}\| \le \sqrt{n(2n-1)} \,\Lambda.
		\]
		In particular, the sectional curvature is uniformly bounded:
		\[
		|K| \le C(n)\,\Lambda
		\]
		for some constant $C(n)>0$ depending only on the dimension.
		
		The Euler characteristic can be expressed in terms of the curvature tensor as
		\[
		\chi(X) = \int_X P(K(X,g))\, d\mathrm{Vol}_g,
		\]
		where $P$ is an explicit homogeneous polynomial in the components of the curvature tensor (see \cite{LM89}). Since $\chi(X)\neq 0$ and $P(K(X,g))$ is uniformly bounded in absolute value by a constant depending only on $n$ and $\Lambda$, it follows that there exists $v(n,\Lambda) > 0$ such that $\operatorname{Vol}(X,g) \ge v(n,\Lambda)$.
	\end{proof}
	
	As an immediate consequence we obtain the following compactness statement.
	
	\begin{corollary}\label{C2}
		Let $\widetilde{\mathcal{M}}(\Lambda, D)$ denote the set of closed Riemannian $2n$-manifolds $(X,g)$ satisfying
		\[
		|\lambda_k(g)| \le \Lambda, \quad \operatorname{diam}(X,g) \le D, \quad \chi(X) \ne 0.
		\]
		Then $\widetilde{\mathcal{M}}(\Lambda, D)$ is precompact in the $C^{1,\alpha}$- and $L^p_2$-topologies (up to diffeomorphism).
	\end{corollary}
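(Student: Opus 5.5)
The plan is to reduce Corollary~\ref{C2} directly to Kasue's convergence theorem (Theorem~\ref{thm:Cheeger-Gromov}). The three bounds needed there are a uniform sectional curvature bound, a uniform diameter bound, and a uniform volume lower bound. We only have to produce them for every member of $\widetilde{\mathcal{M}}(\Lambda,D)$.

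Fix $(X,g)\in\widetilde{\mathcal{M}}(\Lambda,D)$.

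\textbf{Curvature.} The hypothesis $|\lambda_k(g)|\le\Lambda$ says that $\|\mathfrak{R}\|\le\Lambda$ pointwise. For an orthonormal pair $e_i,e_j$ we have
\[
K(e_i,e_j)=g(\mathfrak{R}(e_i\wedge e_j),e_i\wedge e_j),
\]
and $|e_i\wedge e_j|=1$. Hence $|K|\le\Lambda$, so we may take $C(n)=1$ in the proof of Lemma~\ref{L1}.

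\textbf{Diameter.} The bound $\diam\le D$ is part of the definition of $\widetilde{\mathcal{M}}(\Lambda,D)$.

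\textbf{Volume.} Since $\chi(X)\neq0$, Lemma~\ref{L1} gives $\operatorname{Vol}(X,g)\ge v(n,\Lambda)>0$. This bound is uniform over the class. The point is that $|\chi(X)|\ge1$ is an integer, while the Chern--Gauss--Bonnet integrand $P(K)$ is bounded by some $c(n,\Lambda)$. Therefore
\[
1\le |\chi(X)|\le c(n,\Lambda)\operatorname{Vol}(X,g),
\]
which gives the bound with $v=1/c(n,\Lambda)$. Notably, the diameter is not needed for this step.

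\textbf{Compactness.} Now take any sequence $(X_i,g_i)$ in $\widetilde{\mathcal{M}}(\Lambda,D)$. It satisfies the hypotheses of Theorem~\ref{thm:Cheeger-Gromov} with constants $\Lambda$, $v(n,\Lambda)$ and $D$. A subsequence therefore converges to a smooth manifold $X$ equipped with diffeomorphisms $f_i:X\to X_i$ (for $i$ large), and $f_i^*g_i\to g_\infty$ in $C^{1,\alpha}$ for every $\alpha\in(0,1)$.

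Lemma~\ref{lem:curvature-convergence}(a) then upgrades this convergence to $L^p_2$ for every $p$, after passing to a further subsequence. A diagonal argument over a countable family $\alpha_m\to1$ and $p_m\to\infty$ handles all exponents simultaneously. This is precisely precompactness up to diffeomorphism.

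\textbf{Limit point.} The limit $(X,g_\infty)$ is a $C^{1,\alpha}\cap L^p_2$ metric, so it may not belong to $\widetilde{\mathcal{M}}$ itself. Precompactness only asserts convergence in the closure, so this causes no difficulty. Diffeomorphism invariance of $\chi$ is consistent with the limit as well, although it is not needed.

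The only step requiring any care is the uniformity of the volume bound, and Lemma~\ref{L1} already supplies it. The rest is a direct invocation of the cited theorems, so I expect no real obstacle.
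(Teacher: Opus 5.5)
Your proposal is correct and follows the paper's own argument. Like the paper, you get a sectional curvature bound from the eigenvalue bound, a uniform volume lower bound from Lemma~\ref{L1} via Chern--Gauss--Bonnet, and then apply Kasue's theorem (Theorem~\ref{thm:Cheeger-Gromov}). Your extra appeal to Lemma~\ref{lem:curvature-convergence} and the diagonal argument is harmless but unnecessary, since Theorem~\ref{thm:Cheeger-Gromov} as stated already gives both $C^{1,\alpha}$ and $L^p_2$ convergence for all exponents.
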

	
	\begin{proof}
		Given $(X,g)\in\widetilde{\mathcal{M}}(\Lambda, D)$, Lemma~\ref{L1} provides a uniform lower bound $\operatorname{Vol}(X,g)\ge v(n,\Lambda)>0$. Together with the diameter bound and the uniform bound on the eigenvalues of the curvature operator, we obtain a uniform sectional curvature bound. The conclusion then follows from Theorem~\ref{thm:Cheeger-Gromov}.
	\end{proof}
	
	\begin{proposition}[\cite{BK78, Kul72}]\label{P2}
		Let $X$ be a closed smooth $2n$-dimensional Riemannian manifold. If the eigenvalues of the curvature operator are nonnegative at every point, then $\chi(X) \ge 0$.
	\end{proposition}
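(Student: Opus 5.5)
The plan is to use the Chern--Gauss--Bonnet formula pointwise: if the Pfaffian integrand is pointwise nonnegative whenever $\mathfrak{R}\ge 0$, then integrating gives $\chi(X)\ge 0$. Write
\[
\chi(X)=\frac{1}{(2\pi)^n n!}\int_X \operatorname{Pf}(\Omega)\,d\mathrm{Vol}_g,
\]
where $\operatorname{Pf}(\Omega)$ is, up to a positive dimensional constant, the full contraction
\[
\sum \epsilon^{i_1\cdots i_{2n}}\epsilon^{j_1\cdots j_{2n}} R_{i_1i_2j_1j_2}\cdots R_{i_{2n-1}i_{2n}j_{2n-1}j_{2n}}.
\]
So the whole proof reduces to an algebraic statement at a single point: for a symmetric operator $\mathfrak{R}$ on $\Lambda^2\mathbb{R}^{2n}$ that satisfies the first Bianchi identity and is nonnegative, this polynomial is nonnegative.

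For the algebraic step, diagonalize $\mathfrak{R}=\sum_a \mu_a\,\omega_a\otimes\omega_a$ with $\mu_a\ge 0$ and $\{\omega_a\}$ orthonormal in $\Lambda^2$. Substituting into the Pfaffian expression gives a sum over $n$-tuples $(a_1,\dots,a_n)$:
\[
\operatorname{Pf}(\Omega)=c_n\sum_{a_1,\dots,a_n}\mu_{a_1}\cdots\mu_{a_n}\,\big\langle \omega_{a_1}\wedge\cdots\wedge\omega_{a_n},\mathrm{vol}\big\rangle^2 ,
\]
with $c_n>0$. The two $\epsilon$-contractions split into the product of the same scalar $\ast(\omega_{a_1}\wedge\cdots\wedge\omega_{a_n})$ with itself, so each term is a square. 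Every term is then a product of nonnegative eigenvalues times a square, so the integrand is pointwise $\ge 0$.

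Two checks are needed here. First, the contraction must genuinely factor into a square, which requires the symmetry $R_{ijkl}=R_{klij}$ already recorded in the proof of Lemma~\ref{L1}. Second, the sign convention for $R$ and the orientation of the curvature form must give $c_n>0$. I would verify the convention by testing on the round sphere $S^{2n}$, where $\mathfrak{R}=\mathrm{Id}$ and $\chi=2>0$. The Bianchi identity is not needed for positivity; it only guarantees that we are computing the true Gauss--Bonnet integrand.

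The main obstacle is the bookkeeping in this factorization, in particular that no cross terms between distinct eigenforms survive with an indefinite sign. Expanding multilinearly in the decomposition $\sum_a \mu_a\,\omega_a\otimes\omega_a$ handles this: the left index pairs pick out $\omega_{a_k}$ and the right pairs pick out the same $\omega_{a_k}$, so the left and right $\epsilon$-contractions give identical scalars. Finally, integrating over $X$ gives $\chi(X)\ge0$. This is the classical argument of \cite{Kul72,BK78}, and I would present it at this level, citing \cite{LM89} for the explicit form of the Chern--Gauss--Bonnet integrand.
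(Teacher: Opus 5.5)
Your proposal is correct and matches the paper's approach: the paper gives no proof of its own beyond citing Kulkarni and Bourguignon--Karcher, whose argument is exactly Chern--Gauss--Bonnet combined with the pointwise identity that, after writing $\mathfrak{R}=\sum_a\mu_a\,\omega_a\otimes\omega_a$, the Gauss--Bonnet integrand equals $c_n\sum\mu_{a_1}\cdots\mu_{a_n}\,[\ast(\omega_{a_1}\wedge\cdots\wedge\omega_{a_n})]^2$ with $c_n>0$. (The only unstated detail is the non-orientable case, which is harmless because the $\epsilon\epsilon$-contraction is orientation-independent; alternatively, pass to the orientable double cover, which doubles $\chi$.)
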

	
	We are now ready to prove our main theorem.
	
	\begin{proof}[\textbf{Proof of Theorem~\ref{thm:main}}]
		Suppose, for the sake of contradiction, that the statement is false. Then there exist $\Lambda > 0$ and a sequence $\{\varepsilon_i\}_{i=1}^\infty$ of positive numbers such that:
		\begin{enumerate}
			\item[(1)] $\varepsilon_i \to 0$ as $i \to \infty$;
			\item[(2)] for each $i$, there exists a connected closed $2n$-manifold $X_i$ equipped with a Riemannian metric $g_i$ such that
			\[
			-\varepsilon_i \le \lambda_1(X_i, g_i) \cdot \operatorname{diam}^2(X_i, g_i) \le \cdots \le \lambda_{n(2n-1)}(X_i, g_i) \cdot \operatorname{diam}^2(X_i, g_i) \le \Lambda,
			\]
			and $\chi(X_i) < 0$.
		\end{enumerate}
		
		After rescaling each metric, we may assume that
		\[
		\operatorname{diam}(X_i, g_i) = 1 \quad \text{for all } i.
		\]
		The two-sided bounds on the eigenvalues of the curvature operator then imply that
		\[
		|\lambda_k(X_i,g_i)| \le \Lambda \quad \text{for all } k,i,
		\]
		and Lemma~\ref{L1} yields a uniform lower bound $\operatorname{Vol}(X_i,g_i) \ge v(n,\Lambda) > 0$. Thus the sequence $(X_i,g_i)$ satisfies the hypotheses of Theorem~\ref{thm:Cheeger-Gromov}. By passing to a subsequence, we obtain:
		\begin{enumerate}
			\item[(a)] a closed differentiable manifold $X$ endowed with a Riemannian metric $g_\infty$ of class $C^{1,\alpha}$ for all $0 < \alpha < 1$;
			\item[(b)] diffeomorphisms $f_i : X \to X_i$ such that $f_i^* g_i \to g_\infty$ in the $C^{1,\alpha}$-topology for all $\alpha \in (0,1)$.
		\end{enumerate}
		
		Replacing $(X_i, g_i)$ by $(X, f_i^* g_i)$, we may and shall henceforth assume that all metrics $g_i$ are defined on the same manifold $X$, that
		\[
		g_i \to g_\infty \quad \text{in } C^{1,\alpha} \text{ for all } \alpha \in (0,1),
		\]
		and that $\chi(X) = \chi(X_i) < 0$ for all $i$. Applying Lemma~\ref{lem:curvature-convergence}, and passing to a further subsequence if necessary, we may also assume that
		\[
		g_i \to g_\infty \quad \text{in } L^p_2(X) \text{ for all } p \in [1,\infty),
		\]
		and that the corresponding curvature tensors $K_i$ and $K_\infty$ satisfy
		\[
		K_i \to K_\infty \quad \text{in } L^p(X) \text{ for all } p \ge 1.
		\]
		In particular, $K_\infty \in L^p(X)$ for all $p > 1$.
		
		Let $\mathfrak{R}_i$ and $\mathfrak{R}_\infty$ denote the curvature operators of $(X,g_i)$ and $(X,g_\infty)$, viewed as self-adjoint endomorphisms of $\Lambda^2 TX$. The $L^p$-convergence of the curvature tensors implies that
		\[
		\|\mathfrak{R}_i - \mathfrak{R}_\infty\| \to 0 \quad\text{in }L^p(X)
		\]
		for all $p\ge1$, where $\|\cdot\|$ denotes the operator norm on each fiber of $\Lambda^2 TX$. Let $\lambda_1(g_i)\le\cdots\le\lambda_{N}(g_i)$ (with $N=n(2n-1)$) be the eigenvalues of $\mathfrak{R}_i$, and similarly for $\mathfrak{R}_\infty$. Applying Theorem~\ref{T3} pointwise and integrating, we obtain
		\[
		\max_{1\le k\le N}|\lambda_k(g_i) - \lambda_k(g_\infty)| \le \|\mathfrak{R}_i - \mathfrak{R}_\infty\|
		\]
		and hence
		\[
		\lambda_k(g_i) \to \lambda_k(g_\infty) \quad\text{in }L^p(X)
		\]
		for all $p\ge1$ and all $1\le k\le N$.
		
		Using the assumptions on the eigenvalues of the curvature operators of $g_i$ and the fact that $\operatorname{diam}(X,g_i)=1$, we have
		\[
		-\varepsilon_i \le \lambda_1(g_i) \le \cdots \le \lambda_N(g_i) \le \Lambda
		\]
		for all $i$. Combining the $L^p$-convergence with these bounds, we may pass to a further subsequence (still denoted by $i$) such that, for almost every $x\in X$ and all $k$,
		\[
		\lambda_k(g_i)(x) \to \lambda_k(g_\infty)(x).
		\]
		Letting $i\to\infty$ and using $\varepsilon_i\to0$, we conclude that
		\[
		0 \le \lambda_1(g_\infty)(x) \le \cdots \le \lambda_N(g_\infty)(x) \le \Lambda
		\]
		for almost every $x\in X$. In other words, the curvature operator of $g_\infty$ is almost everywhere nonnegative.
		
		On the other hand, by \cite{BK78} the pointwise norm of the curvature operator is controlled by the sectional curvature:
		\[
		|\lambda_k(g)(x)| \le C(n)\,|K(g)(x)|
		\]
		for all $x \in X$ and all $k$, where $C(n) > 0$ depends only on $n$. Consequently, the Euler characteristic can be computed from the limit metric $g_\infty$ as
		\[
		\chi(X) = \lim_{i \to \infty} \int_{X_i} P(K(X_i, g_i))\, d\mathrm{Vol}_{g_i}
		= \int_X P(K(X, g_\infty))\, d\mathrm{Vol}_{g_\infty},
		\]
		where $P$ is the Euler polynomial as in Lemma~\ref{L1}. Since the eigenvalues of the curvature operator of $g_\infty$ are nonnegative almost everywhere, Proposition~\ref{P2} implies that
		\[
		\chi(X) \ge 0,
		\]
		which contradicts the assumption $\chi(X) < 0$. This contradiction completes the proof of Theorem~\ref{thm:main}.
	\end{proof}

	% \section*{Acknowledgements}
	
	% \bigskip
	
\end{document}